\documentclass[10pt, reqno]{amsart}
\usepackage{fancyhdr}
\usepackage{a4wide, amsthm, amscd, amsfonts, amssymb, graphicx,tikz, color, environ}
\usepackage{amsaddr}
\usepackage{amsmath,amssymb,amsfonts,amsthm, graphicx,color,fancyhdr}
\usepackage[latin1]{inputenc}
\usepackage{enumerate}

\usepackage{psfrag}
\usepackage[colorlinks=true, pdfstartview=FitV, allcolors=black]{hyperref}
\usepackage{psfrag,caption}
\usepackage[all]{xy}
\usepackage{comment}
\usepackage{appendix}
\usepackage{verbatim}
\usepackage{mathrsfs}
\usepackage{pdfsync}
\usepackage{graphicx,epsfig}
\newtheorem{thm}{Theorem}[section]

\newtheorem{lem}[thm]{Lemma}
\theoremstyle{stylename}

\let\oldproofname=\proofname
\renewcommand{\proofname}{\rm\bf{\oldproofname}}

\theoremstyle{definition}
 
 \newtheorem{rmk}[thm]{Remark}

\subjclass[2010]{Primary  53C42; Secondary  58J50}
\begin{document}

\title[ Sharp bounds for Steklov eigenvalues]
 {Some sharp bounds for Steklov eigenvalues}
\author{Sheela Verma}
\address{Tata Institute of Fundamental Research \\ Centre For Applicable Mathematics \\ Bangalore, India}
\email{sheela.verma23@gmail.com}

\author{G. Santhanam}
\address{Department of Mathematics and Statistics\\
Indian Institute of Technology Kanpur\\
Kanpur, India}
\email{santhana@iitk.ac.in}


\begin{abstract}
This work is an extension of a result given by Kuttler and Sigillito (SIAM Rev $10$:$368-370$, $1968$) on a star-shaped bounded domain in $\mathbb{R}^2$. Let $\Omega$ be a star-shaped bounded domain in a hypersurface of revolution, having smooth boundary. In this article, we obtain a sharp lower bound for all Steklov eigenvalues on $\Omega$ in terms of the Steklov eigenvalues of the largest geodesic ball contained in $\Omega$ with the same center as $\Omega$. We also obtain similar bounds for all Steklov eigenvalues on star-shaped bounded domain in paraboloid, $P = \left\lbrace (x, y, z) \in \mathbb{R}^{3} : z = x^2 + y^2\right\rbrace$.
\end{abstract}
\keywords{Laplacian, Steklov eigenvalue problem, Star-shaped domain, Rayleigh quotient}
\maketitle
\section{Introduction}
Let $\Omega $ be a bounded domain in a compact connected Riemannian manifold with smooth boundary $\partial\Omega$. The Steklov eigenvalue problem is to find all real numbers $\mu$ for which there exists a nontrivial function $\varphi \in C^{2}(\Omega) \cap C^{1}(\overline{\Omega})$ such that 
\begin{align} \label{Steklov problem}
\begin{array}{rcll}
\Delta \varphi &=& 0 & \mbox{ in } \Omega ,\\
\frac{\partial \varphi}{\partial \nu} &=& \mu \varphi  &\mbox{ on } \partial \Omega,
\end{array}
\end{align}
where $\nu$ is the outward unit normal to the boundary $\partial \Omega$.
This problem was introduced by Steklov \cite{S} for bounded domains in the plane in $1902$. Its importance lies in the fact that the set of eigenvalues of the Steklov problem is same as the set of eigenvalues of the well-known Dirichlet-Neumann map. This map associates to each function defined on $\partial \Omega$, the normal derivative of its harmonic extension on $\Omega$. The eigenvalues of the Steklov problem are discrete and form an increasing sequence $0= \mu_1 < \mu_2 \leq \mu_3 \leq \cdots \nearrow \infty$. The variational characterization of $\mu_l$, $1 \leq l < \infty$ is given by
\begin{align} \label{general variational characterization}
\mu_{l}(\Omega)= \sup_{E} \inf_{0 \neq \varphi \in E^{\perp}} \frac{\int_\Omega{\|\nabla \varphi\|^2}\, dv}{\int_{\partial\Omega}{\varphi^2}\, ds},
\end{align}
where $E$ is a set of ${l-1}$ functions $\phi_{1}, \phi_{2}, \ldots, \phi_{l-1}$ such that $\phi_{i} \in H^{1}(\Omega)$, $1 \leq i \leq {l-1}$ and $ E^{\perp} = \left\lbrace \varphi \in H^{1}(\Omega) : \int_{\partial\Omega} \varphi \phi_{i} ds = 0, 1 \leq i \leq {l-1} \right\rbrace$. For background on this problem, see \cite{GP}.

There are several results which estimate first nonzero eigenvalue of the Steklov eigenvalue problem \cite{BG, BP, E2, E3}. The first upper bound for $\mu_{2}$ was given by Weinstock \cite{W} in $1954$. He proved that among all simply connected planar domains with analytic boundary of fixed perimeter, the circle maximizes $\mu_{2}$. Later F. Brock \cite{B} obtained a sharp upper bound for $\mu_{2}$ by fixing the volume of the domain. He proved that for a bounded Lipschitz domain $\Omega \subset \mathbb{R}^{n}$, $\mu_{2}(\Omega)\, \left( \text{vol}(\Omega)\right)^{\frac{1}{n}} \leq {\omega_{n}}^{\frac{1}{n}}$, where $\omega_{n}$ is the volume of the unit ball in $\mathbb{R}^{n}$ and equality holds if and only if $\Omega$ is a ball. In several recent papers, bounds for all eigenvalues of the Steklov problem have been studied \cite{CGG, GLS, PS, X}. In particular, sharp upper bounds for some specific functions of the Steklov eigenvalues have been derived in \cite{GLS}. Weyl-type bounds have also been obtained for Steklov eigenvalues in \cite{PS, X}.

Let $\Omega\subset \mathbb{R}^{n}$ be a star-shaped domain with smooth boundary $\partial\Omega$. Let $p$ be a center of $\Omega$. Let $R_m = \min\left\lbrace  d(p, x) | x \in \partial\Omega \right\rbrace $, $R_M = \max\left\lbrace d(p, x) | x \in \partial\Omega\right\rbrace $ and $h_m = \min\left\lbrace \langle x, \nu \rangle | x \in \partial\Omega \right\rbrace $, where $\nu$ is the outward unit normal to $\partial\Omega$. With these notations, Bramble and Payne \cite{BP} proved that
$$ \mu_{2}(\Omega) \geq \frac{{R_m}^{n-1}}{{R_M}^{n+1}} \, h_m. $$
Equality holds when $\Omega$ is a ball. 

Kuttler and Sigillito \cite{KS} proved the following lower bound for a star-shaped bounded domain in $\mathbb{R}^2$.
\begin{thm}[\cite{KS}]
Let $\Omega$ be a star-shaped bounded domain in $\mathbb{R}^2$ with smooth boundary and centered at the origin. Then, for $1 \leq k < \infty$, 
\begin{align*}
\mu_{2k+1}(\Omega)\geq \mu_{2k}(\Omega) \geq \frac{k \left[ 1- {2} \bigg/ \left(1+\sqrt{1+ 4\, \min \left( {R(\theta)} / {R'(\theta)}\right)^2}\right)\right]} {\max \sqrt{R^2(\theta) + {R'}^2(\theta)}}, 
\end{align*}
where $R(\theta) = \max \left\lbrace |x| : x \in \Omega, x = |x| e^{i\theta} \right\rbrace $  and equality holds for a disc. 
\end{thm}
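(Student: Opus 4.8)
The plan is to obtain the lower bound for $\mu_{2k}(\Omega)$ from the variational characterisation \eqref{general variational characterization}; since $\mu_{2k+1}(\Omega)\ge\mu_{2k}(\Omega)$ always, this suffices. Write $\partial\Omega$ in polar coordinates centred at the origin as $r=R(\theta)$, $\theta\in[0,2\pi]$, so the boundary line element is $ds=\sqrt{R^2+{R'}^2}\,d\theta$. For the $2k-1$ functions $\phi_1,\dots,\phi_{2k-1}$ in \eqref{general variational characterization} I would choose smooth functions on $\overline\Omega$ whose boundary traces, multiplied by $\sqrt{R^2+{R'}^2}$, are $\cos m\theta$ and $\sin m\theta$, $0\le m<k$ (concretely $\phi=\big((\rho^{m}e^{\pm im\theta})/\sqrt{R^2+{R'}^2}\big)\circ\Phi^{-1}$ with $\Phi$ as below); these $2k-1$ traces are linearly independent, and with this choice the condition $\int_{\partial\Omega}\varphi\,\phi_i\,ds=0$ for all $i$ is precisely the vanishing of the Fourier coefficients of order $<k$ of the boundary trace $f(\theta):=\varphi(R(\theta)\cos\theta,R(\theta)\sin\theta)$. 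It then remains to prove a Poincar\'e--Steklov inequality $\int_\Omega\|\nabla\varphi\|^2\,dA\ge\lambda_k\int_{\partial\Omega}\varphi^2\,ds$ for all such $\varphi$, with $\lambda_k$ the asserted constant.

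Because $\Omega$ is star-shaped about the origin, $\Phi(\rho,\theta)=(\rho R(\theta),\theta)$ is a diffeomorphism of the closed unit disc $\overline D$ onto $\overline\Omega$. Setting $\psi=\varphi\circ\Phi$, so that $\psi(1,\theta)=f(\theta)$, and computing the pulled-back metric --- whose associated quadratic form in $(\psi_\rho,\psi_\theta)$ has determinant one, reflecting the conformal invariance of the two-dimensional Dirichlet integral --- and then completing the square in $\psi_\rho$, one obtains
\begin{align*}
\int_\Omega\|\nabla\varphi\|^2\,dA
&=\int_0^{2\pi}\!\!\int_0^1\left[\frac{R^2+{R'}^2}{R^2}\,\rho\left(\psi_\rho-\frac{RR'}{(R^2+{R'}^2)\rho}\,\psi_\theta\right)^2+\frac{R^2}{(R^2+{R'}^2)\rho}\,\psi_\theta^2\right]d\rho\,d\theta,\\
\int_{\partial\Omega}\varphi^2\,ds&=\int_0^{2\pi}\psi(1,\theta)^2\sqrt{R^2+{R'}^2}\;d\theta .
\end{align*}
Now that the domain is the disc, $\psi$ has a genuine Fourier expansion $\psi(\rho,\theta)=\sum_m c_m(\rho)e^{im\theta}$, the constraint on $\varphi$ becomes $c_m(1)=0$ for $|m|<k$, and one has the elementary radial identity $c_m(1)=\int_0^1 c_m'(\rho)\,d\rho$ for $m\neq0$.

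The core of the argument, and what I expect to be the main obstacle, is to extract the sharp constant from the displayed right-hand side in the presence of the $\theta$-dependent weights. Inserting the Fourier expansion turns $\int_\Omega\|\nabla\varphi\|^2\,dA$ into a quadratic form in the radial profiles $\{c_m\}$ that is diagonal precisely when $R'\equiv0$, the off-diagonal terms being generated by the Fourier coefficients of $b:=R'/R$. The nonnegative first bracket is retained to absorb, via Cauchy--Schwarz and the uniform bound $b^2\le 1/\min(R/R')^2$, the off-diagonal (mode-mixing) contributions, while the second bracket, combined with the radial identity and the estimate $\sqrt{R^2+{R'}^2}\le\max\sqrt{R^2+{R'}^2}$ in the denominator, controls the remaining diagonal Steklov quotients mode by mode. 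Balancing the amount of energy spent on absorption against the amount retained leads to a quadratic equation for the retained constant whose positive root produces the factor $1-2\big/\big(1+\sqrt{1+4\min(R/R')^2}\big)$; since every retained mode has $|m|\ge k$, the factor $k$ survives, and assembling the pieces gives $\lambda_k$ as stated.

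Sharpness is then transparent. For a disc of radius $R$ one has $R'\equiv0$, hence no mode mixing, $\max\sqrt{R^2+{R'}^2}=R$, $\min(R/R')^2=\infty$, and the loss factor equals $1$; the infimum over admissible $\varphi$ is attained at $\psi=\rho^k\cos k\theta$ or $\rho^k\sin k\theta$, that is, at the harmonic functions $r^k\cos k\theta$, $r^k\sin k\theta$, whose Steklov quotient equals $k/R$. Thus $\mu_{2k}(\Omega)=\mu_{2k+1}(\Omega)=k/R$ meets the bound.
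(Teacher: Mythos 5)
The paper does not actually prove this statement --- it is imported verbatim from Kuttler--Sigillito as Theorem 1.1 --- so the relevant benchmark is the proof of Theorem \ref{thm: for generalised starshaped}, which generalizes it by exactly the method you describe. Your proposal is correct and is essentially that method: the star-shaped substitution $(\rho,\theta)\mapsto(\rho R(\theta),\theta)$ onto the unit disc, the choice of the $2k-1$ constraint functions as trigonometric polynomials divided by the Jacobian factor $\sqrt{R^2+R'^2}$ so that orthogonality on $\partial\Omega$ becomes the vanishing of the Fourier modes of order $<k$ of the trace on the circle, the reduction of the Rayleigh quotient on $\Omega$ to that of the disc, and the sharpness check for the disc. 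Your completed-square identity for the pulled-back Dirichlet integral is verified correct.

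The one place you make things harder than necessary is the ``mode-mixing/absorption'' step, which you flag as the main obstacle. It dissolves if you kill the cross term \emph{pointwise, before} Fourier expanding: from
\begin{align*}
\int_\Omega\|\nabla\varphi\|^2\,dA=\int_0^{2\pi}\!\!\int_0^1\Bigl[\rho\Bigl(1+\tfrac{R'^2}{R^2}\Bigr)\psi_\rho^2-\tfrac{2R'}{R}\psi_\rho\psi_\theta+\tfrac{1}{\rho}\psi_\theta^2\Bigr]\,d\rho\,d\theta,
\end{align*}
apply Cauchy--Schwarz with a constant parameter, $-\tfrac{2R'}{R}\psi_\rho\psi_\theta\ge-\beta^{-2}\tfrac{R'^2}{R^2}\rho\,\psi_\rho^2-\beta^2\rho^{-1}\psi_\theta^2$, and with $a=\max(R'/R)^2=1/\min(R/R')^2$ choose $\beta^2<1$ so that $1-(\beta^{-2}-1)a=1-\beta^2$; the common value is $\tfrac{(2+a)-\sqrt{a^2+4a}}{2}=1-2\big/\bigl(1+\sqrt{1+4\min(R/R')^2}\bigr)$, and what remains is exactly that constant times $\int_D(\rho\psi_\rho^2+\rho^{-1}\psi_\theta^2)\,d\rho\,d\theta$, the Dirichlet integral on the unit disc. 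No off-diagonal terms ever appear, and on the disc the constraint gives the factor $k=\mu_{2k}(D)$ directly; combining with $\int_{\partial\Omega}\varphi^2\,ds\le\max\sqrt{R^2+R'^2}\int_0^{2\pi}\psi(1,\theta)^2\,d\theta$ yields the stated bound. This is precisely the $\beta^2$-optimization in the paper's proof of Theorem \ref{thm: for generalised starshaped}, so your plan closes without the Fourier bookkeeping you anticipated. (One cosmetic repair: your $m=0$ constraint function $1/\sqrt{R^2+R'^2}\circ\Phi^{-1}$, being a function of the angle alone, fails to be $H^1$ at the origin; since only its boundary trace enters the orthogonality condition, replace it by any $H^1$ extension of that trace.)
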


Following the idea of Kuttler and Sigillito \cite{KS}, Garcia and Montano \cite{GM} and the first author \cite{SV} obtained a similar bound for the first nonzero Steklov eigenvalue on a star-shaped domain in $\mathbb{R}^n$ and $\mathbb{S}^n$, respectively. Let $\Omega$ be a star-shaped bounded domain with smooth boundary $\partial\Omega$ centered at a point $p$ and $\nu$ be the outward unit normal to $\partial\Omega$. For any point $q \in \partial\Omega$, let $0 \leq \theta(q) \leq \alpha < \frac{\pi}{2}$, where $\cos (\theta(q)) = \langle \nu(q), \partial_r(q) \rangle$. Let $a= \tan^{2} \alpha$.

\begin{thm}[\cite{GM}] \label{Thm:GM}
Let $\Omega \subset \mathbb{R}^n$. Then with the above notations, the first nonzero eigenvalue of the Steklov problem $\mu_{2}(\Omega)$ satisfies 
\begin{align*}
\mu_2(\Omega) \geq \frac{(R_{m})^{n-2}}{(R_{M})^{n-1}} \frac{\left\lbrace 2+a-\sqrt{a^2+4\,a}\right\rbrace }{2\sqrt{a+1}}.
\end{align*}

\end{thm}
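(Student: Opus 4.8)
The plan is to reduce the lower bound to a weighted trace inequality, and then to prove that inequality by transplanting onto $\Omega$ the explicitly known Steklov inequality of the largest ball inside $\Omega$, via a radial rescaling. By \eqref{general variational characterization} with $l=2$, and since the first Steklov eigenfunction is constant, it suffices to prove that
$$
\int_{\partial\Omega}\varphi^2\, ds\ \le\ \frac{(R_M)^{n-1}}{(R_m)^{n-2}}\cdot\frac{2\sqrt{a+1}}{\,2+a-\sqrt{a^{2}+4a}\,}\int_{\Omega}\|\nabla\varphi\|^2\, dv
$$
for every $\varphi\in H^1(\Omega)$ with $\int_{\partial\Omega}\varphi\, ds=0$. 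I would work in polar coordinates $(r,\omega)\in(0,\infty)\times\mathbb{S}^{n-1}$ centred at $p$, writing $\partial\Omega=\{R(\omega)\omega:\omega\in\mathbb{S}^{n-1}\}$ with $R_m\le R(\omega)\le R_M$; here the hypothesis $\theta(q)\le\alpha$ translates into $\|\nabla_{\mathbb{S}^{n-1}}R\|\le\sqrt a\,R$, and the surface measure satisfies $ds=(R^{n-1}/\cos\theta)\,d\omega$ with $1/\cos\theta\le\sqrt{a+1}$. Star-shapedness guarantees $B_{R_m}(p)\subset\Omega$, and $\mu_2$ of a Euclidean ball of radius $\rho$ equals $1/\rho$.

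The device is the radial rescaling $\Phi:\Omega\to B_{R_m}(p)$, $\Phi(r\omega)=\bigl(R_m r/R(\omega)\bigr)\omega$, a diffeomorphism fixing $p$ and mapping $\partial\Omega$ onto $\partial B_{R_m}(p)$ with $\Phi(R(\omega)\omega)=R_m\omega$. Put $\tilde\varphi=\varphi\circ\Phi^{-1}$ on $B_{R_m}(p)$ and let $c$ be the boundary average of $\tilde\varphi$ over $\partial B_{R_m}(p)$, i.e.\ the $d\omega$--average of $\varphi|_{\partial\Omega}$. Using $\int_{\partial\Omega}\varphi\, ds=0$ one has $\int_{\partial\Omega}\varphi^2\, ds\le\int_{\partial\Omega}(\varphi-c)^2\, ds$ for this (in fact for any) constant $c$; since $\varphi|_{\partial\Omega}$ and $\tilde\varphi|_{\partial B_{R_m}(p)}$ coincide as functions of $\omega$, and $1/\cos\theta\le\sqrt{a+1}$, $R\le R_M$, this yields
$$
\int_{\partial\Omega}\varphi^2\, ds\ \le\ \Bigl(\tfrac{R_M}{R_m}\Bigr)^{n-1}\sqrt{a+1}\ \int_{\partial B_{R_m}(p)}(\tilde\varphi-c)^2\, ds\ \le\ \Bigl(\tfrac{R_M}{R_m}\Bigr)^{n-1}\sqrt{a+1}\cdot R_m\int_{B_{R_m}(p)}\|\nabla\tilde\varphi\|^2\, dv,
$$
the last step being the Steklov inequality on the ball (valid precisely because $c$ is the boundary average of $\tilde\varphi$).

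It remains to estimate $\int_{B_{R_m}(p)}\|\nabla\tilde\varphi\|^2\, dv$ by $\lambda\int_\Omega\|\nabla\varphi\|^2\, dv$. Changing variables by $\Phi$ turns the left side into $\int_\Omega\bigl\langle(D\Phi^{T}D\Phi)^{-1}\nabla\varphi,\nabla\varphi\bigr\rangle\,|\det D\Phi|\, dv$, so $\lambda$ may be taken to be the pointwise largest eigenvalue of $(D\Phi^{T}D\Phi)^{-1}|\det D\Phi|$. In the orthonormal radial/spherical frame one finds $D\Phi=(R_m/R)N$, where $N$ is block upper-triangular with diagonal blocks $1$ and $I_{n-1}$ and upper-right entry $-u^{T}$, $u:=\nabla_{\mathbb{S}^{n-1}}R/R$, $\|u\|\le\sqrt a$; hence $\det D\Phi=(R_m/R)^{n}$ and $(D\Phi^{T}D\Phi)^{-1}|\det D\Phi|=(R_m/R)^{n-2}(N^{T}N)^{-1}$. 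Since $R\ge R_m$ and $n\ge2$ we get $(R_m/R)^{n-2}\le1$, and $N^{T}N$ has eigenvalues $1$ (on $u^{\perp}$) together with the two roots of $\lambda^{2}-(2+\|u\|^{2})\lambda+1=0$, whose product is $1$, so the smaller root is $\le1$ and the largest eigenvalue of $(N^{T}N)^{-1}$ equals $\tfrac12\bigl(2+\|u\|^{2}+\sqrt{\|u\|^{4}+4\|u\|^{2}}\bigr)\le\tfrac12\bigl(2+a+\sqrt{a^{2}+4a}\bigr)$ by monotonicity in $\|u\|^{2}\le a$. Chaining the three estimates, and using that $\tfrac12(2+a+\sqrt{a^{2}+4a})$ and $\tfrac12(2+a-\sqrt{a^{2}+4a})$ are reciprocal, gives exactly the displayed inequality; taking $\Omega=B_R(p)$ (so $R_m=R_M=R$, $a=0$, $\Phi=\mathrm{id}$) makes every inequality an equality, confirming sharpness.

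The main obstacle — essentially the only substantive point — is the linear-algebra computation of the last paragraph: recognising that the gradient-times-Jacobian distortion of the radial rescaling reduces to the matrix $N^{T}N$, and that its relevant eigenvalue is the larger root of $\lambda^{2}-(2+\|u\|^{2})\lambda+1=0$ with $\|u\|\le\sqrt a$. Once this is in hand the factor $2+a-\sqrt{a^{2}+4a}$ appears automatically; the powers $(R_m)^{n-2}$ and $(R_M)^{n-1}$ come respectively from the pointwise bound $(R_m/R)^{n-2}\le1$ (exactly where $R\ge R_m$ and $n\ge2$ enter) and from the boundary-measure comparison $ds\le(R_M/R_m)^{n-1}\sqrt{a+1}\,R_m^{\,n-1}\,d\omega$. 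The only other care needed is the choice of the subtracted constant $c$ so that it simultaneously validates the mean-value trick on $\partial\Omega$ and serves as the boundary average of $\tilde\varphi$ on $\partial B_{R_m}(p)$ — possible because $\Phi$ identifies the two boundaries over the common parameter $\omega$.
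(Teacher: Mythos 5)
Your proof is correct and follows essentially the same route as the paper's argument (the radial rescaling $\rho = R_m r/R(\omega)$ onto the inscribed ball, the boundary-measure comparison via $\sec\theta\le\sqrt{1+a}$ and $R\le R_M$, and the optimization producing $\tfrac{1}{2}\bigl(2+a-\sqrt{a^2+4a}\bigr)$); your eigenvalue computation for $N^{T}N$ is exactly the paper's Cauchy--Schwarz step with the optimized parameter $\beta^2$, since both reduce to the roots of $\lambda^2-(2+a)\lambda+1=0$. The only packaging difference is that for $l=2$ you may use the mean-zero reduction together with the explicit value $\mu_2(B_\rho)=1/\rho$ in place of the paper's general-$l$ construction of test functions orthogonal to weighted pullbacks of the ball's eigenfunctions.
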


\begin{thm}[\cite{SV}]
Let $\Omega$ be a star-shaped bounded domain in $\mathbb{S}^{n}$ such that $\Omega \subset \mathbb{S}^{n}\backslash \left\lbrace -p \right\rbrace$. Then the first nonzero Steklov eigenvalue $\mu_{2}(\Omega)$ satisfies 
\begin{align*}
\mu_{2} (\Omega) &\geq \left(\frac{R_{m}}{R_{M}} \right) \left( \frac{(2+a) - \sqrt{a^{2}+4\,a}}{2 \, \sqrt{1+ a}}\right)  \frac{\sin^{n-1}\left(R_{m}\right)}{ \sin^{n-1}\left(R_{M}\right)} \, \mu_{2}\left( B\left( R_{m}\right)\right).
\end{align*}
\end{thm}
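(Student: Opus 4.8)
The plan is to adapt the transplantation scheme of Kuttler--Sigillito \cite{KS} (see also Theorem \ref{Thm:GM} and \cite{GM}) to geodesic polar coordinates on $\mathbb{S}^{n}$. I would first fix geodesic polar coordinates $(r,\xi)\in(0,\pi)\times\mathbb{S}^{n-1}$ centred at $p$, in which the round metric of $\mathbb{S}^{n}$ reads $dr^{2}+\sin^{2}r\,g_{0}$, with $g_{0}$ the standard metric on $\mathbb{S}^{n-1}$ and $\nabla_{0}$, $|\cdot|_{0}$ the associated gradient and norm; since $\Omega\subset\mathbb{S}^{n}\setminus\{-p\}$, the coordinate $r$ stays in $(0,\pi)$ and $\sin r>0$ on $\overline{\Omega}$. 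Star-shapedness lets us write $\partial\Omega=\{(R(\xi),\xi):\xi\in\mathbb{S}^{n-1}\}$ for a smooth $R$ with $R_{m}\le R\le R_{M}$, and expressing $\partial\Omega$ as the zero set of $(r,\xi)\mapsto r-R(\xi)$ one finds $\tan^{2}\theta(\xi)=|\nabla_{0}R(\xi)|_{0}^{2}\big/\sin^{2}R(\xi)$ and $ds_{\partial\Omega}=\sin^{n-1}(R(\xi))\,\sec\theta(\xi)\,d\xi$, so that $|\nabla_{0}R|_{0}\le\sqrt{a}\,\sin R$ and $\sec\theta\le\sqrt{1+a}$ along $\partial\Omega$. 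By the variational characterization \eqref{general variational characterization} it suffices to bound below the Rayleigh quotient of the second Steklov eigenfunction $u$ of $\Omega$, which is harmonic, satisfies $\frac{\partial u}{\partial\nu}=\mu_{2}(\Omega)u$ on $\partial\Omega$ together with $\int_{\partial\Omega}u\,ds=0$, and hence $\int_{\Omega}\|\nabla u\|^{2}\,dv=\mu_{2}(\Omega)\int_{\partial\Omega}u^{2}\,ds$.

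Next I would transplant $u$ to the geodesic ball $B(R_{m})$ through the radial stretch $\Phi(r,\xi)=\bigl(\tfrac{R(\xi)}{R_{m}}\,r,\,\xi\bigr)$, a diffeomorphism of $B(R_{m})$ onto $\Omega$ fixing $p$, carrying geodesic rays to geodesic rays and $\partial B(R_{m})$ onto $\partial\Omega$. Put $v:=u\circ\Phi$ on $B(R_{m})$ and $c:=|\partial B(R_{m})|^{-1}\int_{\partial B(R_{m})}v\,ds$; then $v-c$ is admissible for $\mu_{2}(B(R_{m}))$, so
\begin{align*}
\mu_{2}\bigl(B(R_{m})\bigr)\int_{\partial B(R_{m})}(v-c)^{2}\,ds\ \le\ \int_{B(R_{m})}\|\nabla v\|^{2}\,dv .
\end{align*}
To carry the boundary term back to $\Omega$ I would use that $\int_{\partial\Omega}u\,ds=0$ forces $\int_{\partial\Omega}u^{2}\,ds\le\int_{\partial\Omega}(u-c)^{2}\,ds$ for \emph{every} constant $c$; since $(u-c)\circ\Phi$ restricts to $v-c$ on $\partial\Omega$, comparing the boundary elements $\sin^{n-1}R\,\sec\theta\,d\xi$ and $\sin^{n-1}R_{m}\,d\xi$ (with $\sin^{n-1}R\le\sin^{n-1}R_{M}$ on the relevant monotone range and $\sec\theta\le\sqrt{1+a}$) gives $\int_{\partial\Omega}u^{2}\,ds\le\tfrac{\sin^{n-1}R_{M}}{\sin^{n-1}R_{m}}\sqrt{1+a}\int_{\partial B(R_{m})}(v-c)^{2}\,ds$. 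So it only remains to compare the Dirichlet energies of $v$ on $B(R_{m})$ and of $u$ on $\Omega$.

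This energy comparison is the heart of the matter. Writing $\sigma=R(\xi)r/R_{m}$, one has $\partial_{r}v=\tfrac{R}{R_{m}}\partial_{\sigma}u$ and $\nabla_{0}v=\tfrac{r}{R_{m}}(\partial_{\sigma}u)\nabla_{0}R+\nabla_{0}u$, hence
\begin{align*}
\|\nabla v\|^{2}=\Bigl(\tfrac{R}{R_{m}}\Bigr)^{2}(\partial_{\sigma}u)^{2}+\frac{1}{\sin^{2}r}\Bigl|\tfrac{r}{R_{m}}(\partial_{\sigma}u)\nabla_{0}R+\nabla_{0}u\Bigr|_{0}^{2}.
\end{align*}
I would then bound $|\nabla_{0}R|_{0}\le\sqrt{a}\,\sin R$, absorb the cross term by Cauchy--Schwarz with a free parameter $t>0$ (using $|A+B|^{2}\le(1+t)|A|^{2}+(1+t^{-1})|B|^{2}$), and compare the volume elements $\sin^{n-1}r\,dr\,d\xi$ of $B(R_{m})$ and $\sin^{n-1}\sigma\,d\sigma\,d\xi$ of $\Omega$ along $\Phi$ via the monotonicity of $t\mapsto t/\sin t$ and the elementary behaviour of $\sin$ on $(0,\pi)$. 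After optimizing over $t$, the pointwise estimate reduces to bounding the ratio $\bigl[(1+a)s^{2}+2\sqrt{a}\,st+t^{2}\bigr]\big/(s^{2}+t^{2})$ over $s,t\ge0$ by its maximum $\tfrac{(2+a)+\sqrt{a^{2}+4a}}{2}=\bigl(\sqrt{1+a}\,K(a)\bigr)^{-1}$, where $K(a)=\tfrac{(2+a)-\sqrt{a^{2}+4a}}{2\sqrt{1+a}}$; this is the step that produces the radical $\sqrt{a^{2}+4a}$, while the accompanying comparison of volume elements contributes a further factor $R_{M}/R_{m}$, so that altogether $\int_{B(R_{m})}\|\nabla v\|^{2}\,dv\le\tfrac{R_{M}}{R_{m}}\bigl(\sqrt{1+a}\,K(a)\bigr)^{-1}\int_{\Omega}\|\nabla u\|^{2}\,dv$. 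I expect the main obstacle to be obtaining this \emph{sharp} optimization constant while correctly bookkeeping the $\sin$- and $R$-ratios generated by the Jacobian of $\Phi$, so that all geometric factors assemble into exactly the right combination.

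Putting the three inequalities together with $\int_{\Omega}\|\nabla u\|^{2}\,dv=\mu_{2}(\Omega)\int_{\partial\Omega}u^{2}\,ds$ yields
\begin{align*}
\mu_{2}\bigl(B(R_{m})\bigr)\ \le\ \frac{R_{M}}{R_{m}}\,\frac{\sin^{n-1}R_{M}}{\sin^{n-1}R_{m}}\,\frac{1}{K(a)}\,\mu_{2}(\Omega),
\end{align*}
which, after rearranging, is precisely the asserted bound. For the equality discussion, when $\Omega=B(R)$ is a geodesic ball centred at $p$ one has $R_{m}=R_{M}=R$, $a=0$, $\Phi=\mathrm{id}$ and $K(0)=1$, so every inequality above collapses and $\mu_{2}(\Omega)=\mu_{2}\bigl(B(R_{m})\bigr)$.
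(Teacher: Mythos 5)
Your proposal is correct and follows essentially the same route as the paper's own method (the proof of Theorem \ref{thm: for generalised starshaped} specialized to $h=\sin$): the radial transplantation $\rho = rR_m/R$, the boundary-element comparison $\sin^{n-1}R\,\sec\theta\,d\xi$ versus $\sin^{n-1}R_m\,d\xi$, the Cauchy--Schwarz absorption of the cross term, and the optimization whose extremal eigenvalue $\tfrac{(2+a)+\sqrt{a^2+4a}}{2}$ is exactly the reciprocal of the paper's constant $\tfrac{(2+a)-\sqrt{a^2+4a}}{2}$. The only (harmless, for $l=2$) difference is the direction of the test-function argument: you transplant the eigenfunction of $\Omega$ into $B(R_m)$ and test $\mu_2(B(R_m))$, whereas the paper bounds the Rayleigh quotient of an arbitrary admissible function on $\Omega$ from below and transplants the orthogonality constraints, which is what lets it treat all $\mu_l$ at once; both also require $\sin$ increasing on $[0,R_M]$, i.e.\ the hemisphere condition you implicitly invoke.
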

Here $R_{m}$ and $R_{M}$ are defined as above.

In Theorem \ref{thm: for generalised starshaped}, we obtain a lower bound similar to \cite{SV}, for all Steklov eigenvalues on a star-shaped domain $\Omega$ in hypersurface of revolution centered at pole. In Theorem \ref{thm: star shaped domain in paraboloid}, we prove a result for a star-shaped domain in a paraboloid in $\mathbb{R}^{3}$ analogous to the above. The main tool used to prove these results is the construction of suitable test function for the variational characterization of the corresponding eigenvalues.  
\section{Eigenvalues on hypersurface of revolution}
Let $M$ be a hypersurface of revolution with metric $g = dr^{2} + h^{2}(r) g_{\mathbb{S}^{n-1}}$, where $g_{\mathbb{S}^{n-1}}$ is the usual metric on $\mathbb{S}^{n-1}$ and $r \in \left[0, L \right] $ for some $L \in \mathbb{R}^{+}$. Moreover, We assume that $h$ satisfies $h(0) = 0$, $h'(0) = 1$. Let $\Omega$ be a star-shaped bounded domain in $M$ with respect to the pole $p$ of $M$. Let $\partial\Omega$ be the smooth boundary of $\Omega$ with outward unit normal $\nu$. Since $\Omega$ is star-shaped with respect to the point $p$ and have smooth boundary, then for every point $q \in \partial\Omega,$ there exists a unique unit vector $u \in T_{p}M$ and $R_{u} >0$ such that $q = \exp_{p} (R_{u} \, u)$. Observe that in geodesic polar coordinates, $\Omega$ and $\partial\Omega$ can be written as
\begin{align*}
\partial\Omega &= \left\lbrace ( R_{u}, u)  : u \in T_{p} \mathbb{S}^{n}, \Vert u \Vert=1\right\rbrace  \mbox{ and } \\
\Omega\backslash \left\lbrace p\right\rbrace &= \left\lbrace (r, u)  : u \in T_{p} \mathbb{S}^{n}, \Vert u \Vert=1,  0 < r < R_{u}\right\rbrace.
\end{align*}
Define $R_{m}=\min {R_{u}}, \, R_{M}=\max {R_{u}}$. 

Let $\partial_r$ be the radial vector field starting at $p$, the center of $\Omega$ and $\nu$ be the unit outward normal to $\partial\Omega$. Since $\Omega$ is a star-shaped bounded domain, for any point $q \in \partial\Omega$,  $\cos (\theta(q)) = \langle \nu(q), \partial_r(q) \rangle > 0 $. Therefore $\theta(q) < \frac{\pi}{2}$ for all $q \in \partial\Omega$. By compactness of $\partial\Omega$, there exists a constant $\alpha$ such that $0 \leq \theta(q) \leq \alpha  < \frac{\pi}{2}$ for all $q \in \partial\Omega$. Recall that for any point $q \in \partial\Omega$,  $\tan^{2} (\theta(q))= \frac{\|\overline{\nabla} R_{u}\|^2}{h^{2}(R_{u})}$. Additionally, assume that $h$ also satisfies the following conditions 
\begin{enumerate}[(a)]
\item $\frac{h(r)}{r}$ is a decreasing function of $r$ on $\left[0, R_{M}\right] $,
\item $h(r)$ is an increasing function of $r$ on $\left[0, R_{M}\right] $.
\end{enumerate}

\begin{lem}
Let $h(r)$ be a function defined on $[0, R]$ such that $\frac{h(r)}{r}$ is a decreasing function. Then $h(r)$ satisfies the following properties:
\begin{enumerate}[(a)]
\item If $0 \leq a \leq 1$, then $h(ar) \geq a h(r)$. 
\item If $a \geq 1$, then $h(ar) \leq a h(r)$.
\end{enumerate}
\end{lem}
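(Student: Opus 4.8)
The plan is to read everything off directly from the hypothesis that $t \mapsto h(t)/t$ is non-increasing on $(0,R]$, together with the (implicit, warped-product) positivity $h \geq 0$. Fix $r \in [0,R]$. The case $r = 0$ is immediate: parts (a) and (b) then reduce to $(1-a)h(0) \geq 0$ and $(a-1)h(0) \geq 0$, both of which hold since $h(0) \geq 0$ and $a$ has the indicated sign. So assume $r > 0$ from here on.

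For (a), let $0 \leq a \leq 1$. Then $0 \leq ar \leq r \leq R$, so $ar$ and $r$ both lie in the domain of $h$, and since $h(t)/t$ is decreasing we get $h(ar)/(ar) \geq h(r)/r$; multiplying through by $ar \geq 0$ gives $h(ar) \geq a\,h(r)$ (for $a = 0$ one instead uses $h(0) \geq 0$ directly). For (b), let $a \geq 1$, and assume, as is tacit in the statement, that $ar$ still lies in $[0,R]$ so that $h(ar)$ is defined. Then $r \leq ar \leq R$, the monotonicity of $h(t)/t$ gives $h(ar)/(ar) \leq h(r)/r$, and multiplying by $ar > 0$ yields $h(ar) \leq a\,h(r)$.

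The argument is thus entirely elementary. The only points deserving a word of care are the endpoint $r = 0$, where $h(t)/t$ need not be defined, and the implicit requirement in (b) that the rescaled point $ar$ remain inside $[0,R]$; neither constitutes a genuine obstacle, so I expect the author's proof to be essentially the two-line computation above, used subsequently (with $a = R_m/R_M$ and $a = R_M/R_m$) to compare $h(R_u)$ against $h(R_m)$ and $h(R_M)$.
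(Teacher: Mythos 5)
Your proof is correct and is essentially identical to the paper's: both compare $h(ar)/(ar)$ with $h(r)/r$ via the monotonicity hypothesis and multiply through by $ar$. Your extra care about the endpoints $r=0$, $a=0$ and the domain of definition is more than the paper bothers with, but the argument is the same.
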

\begin{proof} Since $\frac{h(r)}{r}$ is a decreasing function of $r$,  
\begin{align*}
\text{ for }  0 \leq a \leq 1, 0 \leq ar \leq r \text{ and } \frac{h(r)}{r} \leq \frac{h(ar)}{ar}  \\
\text{ for }  a \geq 1, ar \geq r \text{ and } \frac{h(r)}{r} \geq \frac{h(ar)}{ar} .
\end{align*}
Which gives the desired results.
\end{proof}

The following theorem gives a sharp lower bound for all Steklov eigenvalues on a star-shaped domain in $M$.
\begin{thm} \label{thm: for generalised starshaped}
Let $\Omega \subset M $, $\nu$, $\alpha$, $R_{m}$ and $R_{M}$ be as the above. Let $a = \tan^{2}(\alpha) $. Then $\mu_{l}(\Omega)$, $1 \leq l < \infty$ satisfies the following inequality.
\begin{align} \label{main ineq: lower bound}
\mu_{l} (\Omega) &\geq \left(\frac{R_{m}}{R_{M}} \right) \left( \frac{(2+a) - \sqrt{a^{2}+4\,a}}{2 \, \sqrt{1+ a}}\right)  \frac{h^{n-1}\left(R_{m}\right)}{ h^{n-1}\left(R_{M}\right)} \, \mu_{l}\left( B\left( R_{m}\right)\right),
\end{align}
where $B\left( R_{m}\right)\subset M$ is the geodesic ball of radius $R_{m}$ centered at $p$. Further, if $\Omega$ is a geodesic ball, then equality occurs. Conversely, if equality holds for some $l$, then $\Omega$ is a geodesic ball of radius $R_m$.
\end{thm}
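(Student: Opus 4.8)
The plan is to \emph{transplant} the variational problem from $\Omega$ to the geodesic ball $B(R_m)$ via the radial scaling map $G\colon\Omega\to B(R_m)$ given in geodesic polar coordinates by $G(r,u)=\big(\tfrac{R_m}{R_u}\,r,\ u\big)$. Since $R_u\ge R_m>0$ is smooth on $\mathbb S^{n-1}$, $G$ is a bi‑Lipschitz homeomorphism of $\overline\Omega$ onto $\overline{B(R_m)}$ carrying $\partial\Omega$ onto the metric sphere $\partial B(R_m)$, so $\varphi\mapsto\widehat\varphi:=\varphi\circ G^{-1}$ is a linear isomorphism of $H^1(\Omega)$ onto $H^1\big(B(R_m)\big)$ and in particular preserves the dimension of subspaces. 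The theorem reduces to the pointwise Rayleigh comparison
\[
\frac{\int_\Omega\|\nabla\varphi\|^2\,dv}{\int_{\partial\Omega}\varphi^2\,ds}\ \ge\ C\,\frac{\int_{B(R_m)}\|\nabla\widehat\varphi\|^2\,dv}{\int_{\partial B(R_m)}\widehat\varphi^{\,2}\,ds}\qquad\text{for all }\varphi\in H^1(\Omega),
\]
with $C$ the constant multiplying $\mu_l(B(R_m))$ in \eqref{main ineq: lower bound}: granting this, for any $l$‑dimensional $V\subset H^1(\Omega)$ the image $\widehat V$ is again $l$‑dimensional, so the Courant--Fischer min--max form of \eqref{general variational characterization} gives $\mu_l(B(R_m))\le\sup_{0\ne\widehat\varphi\in\widehat V}\frac{\int_{B(R_m)}\|\nabla\widehat\varphi\|^2\,dv}{\int_{\partial B(R_m)}\widehat\varphi^{\,2}\,ds}\le C^{-1}\sup_{0\ne\varphi\in V}\frac{\int_\Omega\|\nabla\varphi\|^2\,dv}{\int_{\partial\Omega}\varphi^2\,ds}$, and taking the infimum over $V$ yields $\mu_l(\Omega)\ge C\,\mu_l(B(R_m))$.

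For the \textbf{denominator} one uses the surface elements $ds_{\partial\Omega}=\dfrac{h^{n-1}(R_u)}{\cos\theta(u)}\,d\sigma(u)$ and $ds_{\partial B(R_m)}=h^{n-1}(R_m)\,d\sigma(u)$ (standard for a radial graph), together with $\widehat\varphi(R_m,u)=\varphi(R_u,u)$; since $\cos\theta\ge\cos\alpha=(1+a)^{-1/2}$ and, by hypothesis (b), $h(R_u)\le h(R_M)$, one obtains $\int_{\partial\Omega}\varphi^2\,ds\le\sqrt{1+a}\,\tfrac{h^{n-1}(R_M)}{h^{n-1}(R_m)}\int_{\partial B(R_m)}\widehat\varphi^{\,2}\,ds$. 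The \textbf{numerator} is the delicate point. Writing $s=\tfrac{R_m}{R_u}\,r$, a direct computation gives $\partial_r\varphi=\tfrac{R_m}{R_u}\partial_s\widehat\varphi$ and $\nabla^{S}\varphi=\nabla^{S}\widehat\varphi-\tfrac{s}{R_u}(\partial_s\widehat\varphi)\,\nabla^{S}R_u$ ($\nabla^{S}$ and $\|\cdot\|$ intrinsic to the standard $\mathbb S^{n-1}$), so that after the substitution $r=\tfrac{R_u}{R_m}s$ in the radial integral,
\[
\int_\Omega\|\nabla\varphi\|^2\,dv=\int_{\mathbb S^{n-1}}\int_0^{R_m}\Big[\tfrac{R_m^2}{R_u^2}(\partial_s\widehat\varphi)^2+\tfrac{1}{h^2(r)}\big\|\nabla^{S}\widehat\varphi-\tfrac{s}{R_u}(\partial_s\widehat\varphi)\nabla^{S}R_u\big\|^2\Big]h^{n-1}(r)\,\tfrac{R_u}{R_m}\,ds\,d\sigma .
\]

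Expanding the square produces the indefinite cross term $-\tfrac{2s}{R_u}(\partial_s\widehat\varphi)\langle\nabla^{S}\widehat\varphi,\nabla^{S}R_u\rangle$, and the hard part is to absorb it. The plan here is to apply Young's inequality with a free parameter $\varepsilon\in(0,1)$, then invoke star‑shapedness in the form $\|\nabla^{S}R_u\|^2\le a\,h^2(R_u)$ and the monotonicity hypotheses on $h$: hypothesis (a) gives $h(r)\ge\tfrac{r}{R_u}h(R_u)$ (as $r\le R_u$), hence $\tfrac{s^2\|\nabla^{S}R_u\|^2}{h^2(r)}\le a\,R_m^2$, while hypotheses (a), (b) and the Lemma bound $h^{n-1}(r)/h^{n-1}(s)$ and $R_u h^{n-3}(r)/(R_m h^{n-3}(s))$ from below by $1$. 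One is then reduced to requiring
\[
\frac{\varepsilon(1+a)-a}{\varepsilon}\cdot\frac{R_m}{R_M}\ \ge\ C_1,\qquad 1-\varepsilon\ \ge\ C_1,
\]
for the coefficients of $(\partial_s\widehat\varphi)^2$ and of $\|\nabla^{S}\widehat\varphi\|^2$, respectively; both hold with $C_1=\tfrac{R_m}{R_M}\cdot\tfrac{(2+a)-\sqrt{a^2+4a}}{2}$ upon choosing $1-\varepsilon=\tfrac{(2+a)-\sqrt{a^2+4a}}{2}$, the smaller root of $x^2-(2+a)x+1=0$ --- the first requirement then becoming an \emph{equality} by that quadratic relation, which is precisely what makes the bound sharp rather than merely valid. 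This yields $\int_\Omega\|\nabla\varphi\|^2\,dv\ge C_1\int_{B(R_m)}\|\nabla\widehat\varphi\|^2\,dv$, and dividing the two estimates produces the Rayleigh comparison with $C=\tfrac{R_m}{R_M}\cdot\tfrac{(2+a)-\sqrt{a^2+4a}}{2\sqrt{1+a}}\cdot\tfrac{h^{n-1}(R_m)}{h^{n-1}(R_M)}$, exactly the constant in \eqref{main ineq: lower bound}. I expect the main obstacle to be this numerator estimate: organizing the cross‑term bound so that the hypotheses on $h$ apply cleanly, and recognizing the algebraic identity that pins down the optimal Young parameter.

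Finally, for the \textbf{equality discussion}: if $\Omega=B(R_m)$ then $R_m=R_M$, $\alpha=0$, $a=0$, $G=\mathrm{id}$, and every inequality above is an equality with constant $1$, so $\mu_l(\Omega)=\mu_l(B(R_m))$. Conversely, assume equality in \eqref{main ineq: lower bound} for some $l\ge2$ (for $l=1$ it is automatic, since $\mu_1\equiv0$). Taking $V$ to be the span of the first $l$ Steklov eigenfunctions of $\Omega$, the infimum in the min--max is attained, so the whole chain of inequalities must be an equality for some genuine Steklov eigenfunction $\varphi^{*}$ of $\Omega$ (with eigenvalue $\mu_l(\Omega)>0$); in particular the denominator estimate is an equality for $\varphi^{*}$, which forces $h(R_u)=h(R_M)$ and $\theta(u)=\alpha$ at every $u$ where $\varphi^{*}(R_u,u)\ne0$. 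Since $\mu_l(\Omega)>0$, unique continuation for harmonic functions prevents $\varphi^{*}$ from vanishing on any nonempty open subset of $\partial\Omega$, so $h(R_u)=h(R_M)$ holds on a dense set and, by continuity, on all of $\mathbb S^{n-1}$; as $h$ is increasing this gives $R_u\equiv R_M$, i.e.\ $\Omega=B(R_M)=B(R_m)$.
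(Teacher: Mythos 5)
Your proposal is correct, and the analytic core --- the change of variables $\rho = rR_m/R_u$, the Young/Cauchy--Schwarz absorption of the cross term with a free parameter, the use of $\|\overline{\nabla}R_u\|^2\le a\,h^2(R_u)$ together with the two monotonicity hypotheses on $h$, and the choice $1-\beta^2=\tfrac{(2+a)-\sqrt{a^2+4a}}{2}$ equalizing the two coefficients --- is exactly the paper's argument, down to the same intermediate inequalities for the numerator and the denominator. Where you genuinely diverge is in the last step: you convert the pointwise Rayleigh-quotient comparison into an eigenvalue comparison via the Courant--Fischer min--max over $l$-dimensional subspaces, using that the transplantation $\varphi\mapsto\varphi\circ G^{-1}$ is a linear isomorphism preserving (trace) dimension. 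The paper instead uses the sup--inf form \eqref{general variational characterization} and must construct explicit constraint functions $\phi_i$, weighted by $h^{n-2}(R_u)\sqrt{h^2(R_u)+\|\overline{\nabla}R_u\|^2}$, so that $L^2(\partial\Omega)$-orthogonality to the $\phi_i$ becomes orthogonality to the ball's eigenfunctions on $\partial B(R_m)$; your subspace formulation buys you the freedom to skip that construction entirely, at the mild cost of having to note that subspaces with degenerate boundary trace cause no harm (their Rayleigh sup is $+\infty$). Your equality discussion is also more careful than the paper's: you correctly observe that the converse is vacuous for $l=1$, and you supply the unique-continuation step (vanishing Cauchy data on an open boundary portion) needed to upgrade ``equality on the support of the trace of an eigenfunction'' to $R_u\equiv\mathrm{const}$, which the paper asserts in one line.
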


\begin{proof}
For a continuously differential real valued function $f$ defined on $\overline{\Omega}$, we first find a lower bound for $ \int_\Omega{\|\nabla f\|^2}\, dv $ and then an upper bound for $  \int_{\partial\Omega}{f^2}\, ds $ to find a lower bound for  $ \frac{\int_\Omega{\|\nabla f\|^2}\, dv}{\int_{\partial\Omega}{f^2}\, ds} $.

Let $f$ be a continuously differential real valued function defined on $\overline{\Omega}$. Then for $q \in \Omega$, $\|\nabla f\|^2 = \left(\frac{\partial f}{\partial r} \right)^2 + \frac{1}{h^{2}(r) }\|\overline{\nabla} f\|^2$.
Therefore
\begin{align*}
\int_\Omega{\|\nabla f\|^2}\, dv = \int_{U_{p}\Omega} \int_{0}^{R_{u}} \left[ \left(\frac{\partial f}{\partial r} \right)^2 + \frac{1}{h^{2}(r) }\|\overline{\nabla} f\|^2\right] h^{n-1}(r) \, dr\, du.
\end{align*}
Let $u' = u$, $\rho = \frac{r \, R_{m}}{R_{u}}$. Then $\overline{\nabla} f = \overline{\nabla}_{u'} f - \frac{\rho}{R_{u'}} \frac{\partial f} {\partial \rho} \, \overline{\nabla}_{u'} R_{u'}$. By abuse of notations, we denote $u'$ by $u$ and $\overline{\nabla}_{u'}$ by $\overline{\nabla}$. Then the above integral can be written as
\begin{align*}
\int_\Omega{\|\nabla f\|^2}\, dv &= \int_{U_{p}\Omega} \int_{0}^{R_{m}} \left[ \left(\frac{R_{m}}{R_{u}} \right)^{2} \left(\frac{\partial f}{\partial \rho} \right)^2 + \frac{1}{h^{2}\left(\frac{\rho \, R_{u}}{R_{m}} \right) }\left\lbrace \|\overline{\nabla} R_{u}\|^2\left(\frac{\rho}{R_{u}} \frac{\partial f}{\partial \rho} \right)^2 \right. \right.\\
& \qquad \left. \left.+ \|\overline{\nabla} f\|^2 - \frac{2\, \rho}{R_{u}} \frac{\partial f}{\partial \rho}\, \langle\overline{\nabla} f, \overline{\nabla} R_{u}\rangle \right\rbrace \right] h^{n-1}\left(\frac{\rho \, R_{u}}{R_{m}} \right) \, \left( \frac{R_{u}}{R_{m}}\right) \, d\rho\, du.
\end{align*}
Next we estimate $\langle\overline{\nabla} f, \overline{\nabla} R_{u}\rangle$. For any function $\beta^2$ on $\overline{\Omega}$, Cauchy-Schwarz inequality gives
\begin{align*}
- \frac{2\, \rho}{R_{m}\, h^{2}\left(\frac{\rho \, R_{u}}{R_{m}} \right)} \left( \frac{\partial f}{\partial \rho}\right) \, \langle\overline{\nabla} f, \overline{\nabla} R_{u}\rangle & \geq - \frac{1}{\beta^2}\frac{\|\overline{\nabla} R_{u}\|^2}{R_{u}\,R_{m} }\left(\frac{\rho}{h \left(\frac{\rho \, R_{u}}{R_{m}} \right) } \right)^2 \left(\frac{\partial f}{\partial \rho} \right)^2 \\
& \qquad -\frac{\beta^2 \, R_{u}}{R_{m}\,h^{2}\left(\frac{\rho \, R_{u}}{R_{m}} \right) }\|\overline{\nabla} f\|^2.
\end{align*}
Thus
\begin{align} \nonumber
\int_\Omega{\|\nabla f\|^2}\, dv & \geq \int_{U_{p}\Omega} \int_{0}^{R_{m}} \left[\left\lbrace \left(\frac{R_{m}}{R_{u}} \right) - \left(\frac{1}{\beta^2} - 1\right)  \frac{\|\overline{\nabla} R_{u}\|^2}{R\,R_{m} }\left(\frac{\rho}{h \left(\frac{\rho \, R_{u}}{R_{m}} \right) } \right)^2 \right\rbrace  \left(\frac{\partial f}{\partial \rho} \right)^2 \right. \\\label{grad f expression}
& \qquad \left. + \frac{R_{u}\, \left( 1- \beta^2 \right) }{R_{m}\,h^{2}\left(\frac{\rho \, R_{u}}{R_{m}} \right) }\|\overline{\nabla} f\|^2  \right] h^{n-1}\left(\frac{\rho \, R_{u}}{R_{m}} \right) \, d\rho\, du. 
\end{align}
Note that $0 \leq \frac{\rho}{R_{m}} \leq 1 \leq \frac{R_{u}}{R_{m}}$ and $0 \leq \rho \leq \frac{\rho\, R_{u}}{R_{m}} \leq R_{u}$. Hence
\begin{align}\label{h inequality1}
\begin{split}
\frac{\rho}{R_{m}} h(R_{u})  \leq h\left( \frac{\rho\, R_{u}}{R_{m}}\right) \leq \frac{R_{u}}{R_{m}} h(\rho) ,\\
0 \leq h^{n-1}(\rho) \leq h^{n-1}\left( \frac{\rho\, R_{u}}{R_{m}}\right) .
\end{split}
\end{align} 
We assume $\beta^{2}<1$ and by substituting above inequalities in \eqref{grad f expression}, we get
\begin{align*} \nonumber
\int_\Omega{\|\nabla f\|^2}\, dv & \geq \int_{U_{p}\Omega} \int_{0}^{R_{m}} \left[\left\lbrace \left(\frac{R_{m}}{R_{u}} \right) - \left(\frac{1}{\beta^2} - 1 \right)  \frac{\|\overline{\nabla} R_{u}\|^2}{R_{u}\,R_{m} }\left(\frac{R_{m}}{h(R_{u})} \right)^2 \right\rbrace  \left(\frac{\partial f}{\partial \rho} \right)^2 \right. \\\nonumber
& \qquad \left. + \frac{R_{u}\, \left( 1- \beta^2 \right) }{R_{m}} \left(\frac{R_{m}}{R_{u}\,h(\rho)} \right)^2  \|\overline{\nabla} f\|^2  \right] h^{n-1}\left(\rho\right) \, d\rho\, du \\\nonumber
& \geq \left(\frac{R_{m}}{R_{M}} \right)\int_{U_{p}\Omega} \int_{0}^{R_{m}} \left[\left\lbrace 1 - \left(\frac{1}{\beta^2} - 1\right)  a \, \right\rbrace  \left(\frac{\partial f}{\partial \rho} \right)^2 \right. \\
& \qquad \left. + \frac{ \left( 1- \beta^2 \right) }{h^{2}(\rho)} \|\overline{\nabla} f\|^2  \right] h^{n-1}\left(\rho\right) \, d\rho\, du. 
\end{align*}
By solving the equation $ 1 - \left(\frac{1}{\beta^2} - 1\right) a =  1- \beta^2 $ for $\beta^2$ we see that
$$1 - \left(\frac{1}{\beta^2} - 1\right) a =1- \beta^{2} = \frac{(2+a) - \sqrt{a^{2}+4\,a}}{2} > 0. $$
From this it follows that
\begin{align} \nonumber
\int_\Omega{\|\nabla f\|^2}\, dv & \geq \left(\frac{R_{m}}{R_{M}} \right) \left( \frac{(2+a) - \sqrt{a^{2}+4\,a}}{2}\right)  \int_{U_{p}\Omega} \int_{0}^{R_{m}} \left[\left(\frac{\partial f}{\partial \rho} \right)^2 \right. \\ \nonumber
& \qquad \left. + \frac{1}{h^{2}(\rho)} \|\overline{\nabla} f\|^2  \right] h^{n-1}\left(\rho\right) \, d\rho\, du\\ \label{grad f inequality}
&= \left(\frac{R_{m}}{R_{M}} \right) \left( \frac{(2+a) - \sqrt{a^{2}+4\,a}}{2}\right)  \int_{{B(R_{m})}}  {\|\nabla f\|^2}\, dv.
\end{align} 

Now we find an upper bound for $\int_{\partial\Omega}{f^2}\, ds$.

Recall that the Riemannian volume element on $\partial\Omega$, denoted $ds$, is given by $ds = \sec (\theta)\, h^{n-1}\left(R_{u}\right)\, du$ (see \cite{TF}). Then
\begin{align*}
\int_{\partial\Omega}{f^2}\, ds = \int_{U_{p}\Omega} f^2 \, \sec (\theta)\, h^{n-1}\left(R_{u}\right)\, du. 
\end{align*}
By using the fact that $h^{n-1} (R_{m}) \leq h^{n-1} (R_{u}) \leq h^{n-1} (R_{M}) $ and substituting $r = \frac{\rho \, R_{u}}{R_{m}}$, this integral becomes
\begin{align} \label{f^2 inequality}
\int_{\partial\Omega}{f^2}\, ds &\leq \frac{\sec (\alpha) \, h^{n-1}\left(R_{M}\right)}{h^{n-1}\left(R_{m}\right)}\int_{S(R_{m})} f^2 \, ds.
\end{align}
By inequalities \eqref{grad f inequality} and \eqref{f^2 inequality}, we have
\begin{align} \label{inequality for vari char for f}
\frac{\int_\Omega{\|\nabla f\|^2}\, dv}{\int_{\partial\Omega}{f^2}\, ds} &\geq \left(\frac{R_{m}}{R_{M}} \right) \left( \frac{(2+a) - \sqrt{a^{2}+4\,a}}{2}\right)  \frac{h^{n-1}\left(R_{m}\right)}{\sec (\alpha) \, h^{n-1}\left(R_{M}\right)}\frac{\int_{{B(R_{m})}}  {\|\nabla f\|^2}\, dv}{\int_{S(R_{m})} f^2 \, ds} .
\end{align}
We now construct some specific test functions for the variational characterization of $\mu_{l} (\Omega)$.

We choose the functions $\phi_{i}$, $1 \leq i < \infty$ such that $\phi_{i} h^{n-2}(R_{u}) \sqrt{h^{2}(R_{u})+ \|\overline{\nabla} R_{u}\|^2}$ is the $i$th Steklov eigenfunction of $B(R_{m})$. Let $\varphi$ be an arbitrary function which satisfies
\begin{align*}
\int_{\partial B(R_{m})} \varphi \phi_{i} h^{n-2}(R_{u}) \sqrt{h^{2}(R_{u})+ \|\overline{\nabla} R_{u}\|^2}\, ds =0.
\end{align*}
Note that
\begin{align*}
\int_{\partial \Omega} \varphi \phi_{i} ds = \int_{U_{p}\Omega} \varphi \phi_{i} \, \frac{\sqrt{h^{2}(R_{u})+ \|\overline{\nabla} R_{u}\|^2}}{h(R_{u})}\, h^{n-1}\left(R_{u}\right)\, du.
\end{align*}
By substituting $r = \frac{\rho \, R_{u}}{R_{m}}$, the above integral becomes
\begin{align*}
\int_{\partial \Omega} \varphi \phi_{i} ds &= \frac{1}{h^{n-1}\left(R_{m}\right)}\int_{\partial B(R_{m})} \varphi \phi_{i} \, \sqrt{h^{2}(R_{u})+ \|\overline{\nabla} R_{u}\|^2}\, h^{n-2}\left(R_{u}\right)\, ds\\
&=0.
\end{align*}
Fix $E= \left\lbrace \phi_{1}, \phi_{2},\ldots, \phi_{l-1}\right\rbrace$ in \eqref{general variational characterization}. Then it follows from \eqref{general variational characterization} that
\begin{align} \nonumber
\mu_{l} (\Omega) &\geq \inf_{\substack{\varphi \neq 0 \\ \int_{\partial \Omega} \varphi \phi_{i} ds=0,\\ 1 \leq i \leq l-1}} \frac{\int_\Omega{\|\nabla \varphi\|^2}\, dv}{\int_{\partial\Omega}{\varphi^2}\, ds}\\ \nonumber
& \geq \left(\frac{R_{m}}{R_{M}} \right) \left( \frac{(2+a) - \sqrt{a^{2}+4\,a}}{2\, \sqrt{1+a}}\right)  \frac{h^{n-1}\left(R_{m}\right)}{h^{n-1}\left(R_{M}\right)} \\ \label{prefinal inequality}
& \qquad \inf_{\substack{\varphi \neq 0 \\ \int_{\partial B(R_{m})} \varphi \phi_{i} h^{n-2}(R_{u}) \sqrt{h^{2}(R_{u})+ \|\overline{\nabla} R_{u}\|^2} ds=0,\\ 1 \leq i \leq l-1}} \frac{\int_{{B(R_{m})}}  {\|\nabla \varphi\|^2}\, dv}{\int_{\partial B(R_{m})} \varphi^2 \, ds}.
\end{align}
Since $\phi_{i} h^{n-2}(R_{u}) \sqrt{h^{2}(R_{u})+ \|\overline{\nabla} R_{u}\|^2}$ is the $i$th Steklov eigenfunction of $B(R_{m})$, we have
\begin{align*}
\inf_{\substack{0 \neq \varphi \\ \int_{\partial B(R_{m})} \varphi \phi_{i} h^{n-2}(R_{u}) \sqrt{h^{2}(R_{u})+ \|\overline{\nabla} R_{u}\|^2} ds=0,\\ 1 \leq i \leq l-1}} \frac{\int_{{B(R_{m})}}  {\|\nabla \varphi\|^2}\, dv}{\int_{\partial B(R_{m})} \varphi^2 \, ds} = \mu_{l}\left( B\left( R_{m}\right)\right).
\end{align*}
By substituting the above value in \eqref{prefinal inequality}, we get \eqref{main ineq: lower bound}. If $\Omega$ is a geodesic ball, then $R_{m}= R_{M} $ and $a = 0$, hence equality holds in \eqref{main ineq: lower bound}. Next if equality holds in \eqref{main ineq: lower bound} for some $l$, then equality holds in \eqref{h inequality1} and $R_{u}= R_{m}$. Hence $\Omega$ is a geodesic ball.
\end{proof} 

\begin{rmk}
In \cite{GM} and \cite{SV}, authors obtained a lower bound for the first nonzero Steklov eigenvalue on a star-shaped bounded domain in $\mathbb{R}^{n}$ and $\mathbb{S}^{n}$, respectively. Using the above idea, a similar bound can be obtained for all nonzero Steklov eigenvalues on a star-shaped bounded domain in $\mathbb{R}^{n}$ and $\mathbb{S}^{n}$.
\end{rmk}

\section{Eigenvalues on a paraboloid in $\mathbb{R}^{3}$}
In this section, we state and prove the result for a star-shaped bounded domain in a paraboloid $P = \left\lbrace (x, y, z) \in \mathbb{R}^{3} : z = x^2 + y^2\right\rbrace$. We first fix some notations which will be used to state the main result of this section.

We use the parametrization $\left( r \cos\theta, r \sin\theta, r^{2}\right)$ for paraboloid $P$, where $ \theta \in [0, 2 \pi)$ and $r \geq 0$. Then the line element $ds^{2}$ and the area element $dA$ on $P$ is given by $ds^{2}= \left( 1+ 4r^{2} \right)dr^{2} + r^{2} \, d\theta^{2}$ and $dA = r \sqrt{1+ 4r^{2}}\, dr \, d\theta$, respectively. Let $\Omega \subset P$ be a star-shaped bounded domain with respect to the origin and have smooth boundary $\partial \Omega$. Then there exists a function $R:[0, 2 \pi)\longrightarrow \mathbb{R}^{+}$ such that 
\begin{align*}
\partial\Omega &= \left\lbrace ( R(\theta), \theta)  : \theta \in [0, 2 \pi)\right\rbrace  \mbox{ and } \\
\Omega\backslash \left\lbrace 0\right\rbrace &= \left\lbrace (r, \theta)  : \theta \in [0, 2 \pi), 0 < r < R(\theta)\right\rbrace.
\end{align*}
Hereafter, we denote $R(\theta)$ by $R_{\theta}$. Let $R_m = \text{min}\left\lbrace R_{\theta}: \theta \in [0, 2 \pi)\right\rbrace $ and $R_M = \text{max}\left\lbrace R_{\theta}: \theta \in [0, 2 \pi)\right\rbrace $. Define $B(R_{m}) = \left\lbrace ( R_{m}, \theta)  : \theta \in [0, 2 \pi)\right\rbrace$. Let $\nu$ be the outward unit normal to $\partial \Omega$. Let $a= \max\left\lbrace \left( 1+ 4 R_{\theta}^{2}\right)\left( \frac{R_{\theta}'}{R_{\theta}}\right)^{2} : \theta \in [0, 2 \pi)  \right\rbrace $. With these notations, we prove the following theorem.
\begin{thm} \label{thm: star shaped domain in paraboloid}
Let $\Omega$, $\nu$, $a$, $R_{m}$ and $R_{M}$ be as the above. Then $\mu_{l}(\Omega)$, $1 \leq l < \infty$ satisfies 
\begin{align}  \label{main ineq: lower bound in paraboloid}
\mu_{l} (\Omega) &\geq \left(\frac{R_{m}}{R_{M}} \right)^{3} \left( \frac{(2+a) - \sqrt{a^{2}+4\,a}}{2 \sqrt{1+a}}\right) \mu_{l}\left( B\left( R_{m}\right)\right).
\end{align}
Furthermore, if equality holds for some $l$ then $\Omega$ is a geodesic ball of radius $R_{m}$ and if $\Omega$ is a geodesic ball then equality holds in \eqref{main ineq: lower bound in paraboloid}.
\end{thm}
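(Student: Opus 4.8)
The plan is to mimic the proof of Theorem~\ref{thm: for generalised starshaped} almost verbatim, with $M$ replaced by the paraboloid $P$ and the role of the warping function $h(r)$ played by the quantities appearing in the paraboloid metric $ds^{2}=(1+4r^{2})\,dr^{2}+r^{2}\,d\theta^{2}$. Concretely, I would write a function $f$ on $\overline{\Omega}$ in the polar coordinates $(r,\theta)$, compute $\|\nabla f\|^{2}=\frac{1}{1+4r^{2}}\left(\frac{\partial f}{\partial r}\right)^{2}+\frac{1}{r^{2}}\left(\frac{\partial f}{\partial\theta}\right)^{2}$, and express $\int_{\Omega}\|\nabla f\|^{2}\,dA$ as an iterated integral over $\theta\in[0,2\pi)$ and $r\in(0,R_{\theta})$ against the area element $r\sqrt{1+4r^{2}}$. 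Then I substitute $\rho=rR_{m}/R_{\theta}$ (with $u'=\theta$, as in the general case) so that the domain of integration becomes the fixed ball $B(R_{m})$; this produces cross terms $\langle\overline{\nabla}f,\overline{\nabla}R_{\theta}\rangle$ which I control with the Cauchy--Schwarz inequality exactly as in \eqref{grad f expression}, introducing a free parameter $\beta^{2}<1$.

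The second step is to bound the resulting $r$-dependent coefficients by constants. Here I would use the monotonicity built into the paraboloid: the analogue of condition (a) is that $\frac{r\sqrt{1+4r^{2}}}{r}=\sqrt{1+4r^{2}}$ is \emph{increasing}, and one checks by elementary estimates (using $0\le\rho\le\rho R_{\theta}/R_{m}\le R_{\theta}$ and $R_{m}\le R_{\theta}\le R_{M}$, together with the definition $a=\max(1+4R_{\theta}^{2})(R_{\theta}'/R_{\theta})^{2}$) that the coefficient of $\left(\frac{\partial f}{\partial\rho}\right)^{2}$ is at least $\left(\frac{R_{m}}{R_{M}}\right)^{3}\bigl\{1-(\tfrac{1}{\beta^{2}}-1)a\bigr\}$ and the coefficient of $\left(\frac{\partial f}{\partial\theta}\right)^{2}$ is at least $\left(\frac{R_{m}}{R_{M}}\right)^{3}(1-\beta^{2})/\rho^{2}$, the power $3$ coming from collecting the various ratios $R_{m}/R_{\theta}$ and $R_{\theta}/R_{m}$ as well as a ratio of the $\sqrt{1+4r^{2}}$ factors. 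Balancing the two coefficients by solving $1-(\tfrac1{\beta^2}-1)a=1-\beta^{2}$ gives, just as before, the common value $\frac{(2+a)-\sqrt{a^{2}+4a}}{2}$, and hence
\begin{align*}
\int_{\Omega}\|\nabla f\|^{2}\,dA\;\geq\;\left(\frac{R_{m}}{R_{M}}\right)^{3}\left(\frac{(2+a)-\sqrt{a^{2}+4a}}{2}\right)\int_{B(R_{m})}\|\nabla f\|^{2}\,dA.
\end{align*}

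The third step is the boundary term. The line element along $\partial\Omega$ and the relation between $\langle\nu,\partial_{r}\rangle$ and $R_{\theta}'$ give, for the arclength $ds$ on $\partial\Omega$, an expression of the form $ds=\sqrt{R_{\theta}^{2}+(1+4R_{\theta}^{2})^{-1}?}\,d\theta$ times the appropriate factor; the key point, paralleling \eqref{f^2 inequality}, is that $\int_{\partial\Omega}f^{2}\,ds\leq\frac{\sqrt{1+a}\,}{1}\cdot(\text{ratio of metric factors at }R_{M}\text{ vs }R_{m})\int_{\partial B(R_{m})}f^{2}\,ds$ after the substitution $r=\rho R_{\theta}/R_{m}$, where the $\sqrt{1+a}$ absorbs the $\sec(\theta(q))$ factor via $\tan^{2}\theta=(1+4R_{\theta}^{2})(R_{\theta}'/R_{\theta})^{2}\le a$. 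Combining with the gradient estimate yields the pointwise Rayleigh-quotient inequality analogous to \eqref{inequality for vari char for f}, now with constant $\left(\frac{R_{m}}{R_{M}}\right)^{3}\frac{(2+a)-\sqrt{a^{2}+4a}}{2\sqrt{1+a}}$.

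The final step is the test-function argument: I choose $\phi_{1},\dots,\phi_{l-1}$ so that $\phi_{i}$ times the appropriate Jacobian factor (the one relating $ds$ on $\partial\Omega$ to $ds$ on $\partial B(R_{m})$ under $r=\rho R_{\theta}/R_{m}$) is the $i$th Steklov eigenfunction of $B(R_{m})$, verify that orthogonality on $\partial\Omega$ transfers to orthogonality on $\partial B(R_{m})$ by the change of variables, and apply \eqref{general variational characterization} to $\Omega$ with this choice of $E$, then recognize the infimum over $B(R_{m})$ as $\mu_{l}(B(R_{m}))$. Equality analysis: if $\Omega=B(R_{m})$ then $R_{\theta}\equiv R_{m}$, $a=0$, and every inequality above is an equality; conversely equality for some $l$ forces equality in the $r$-estimates (forcing $R_{\theta}=R_{m}$) and in $\tan^{2}\theta\le a$, so $\Omega$ is the geodesic ball of radius $R_{m}$. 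The main obstacle I anticipate is purely bookkeeping: correctly tracking all the powers of $R_{m}/R_{\theta}$ and the $(1+4r^{2})$ factors through the substitution so that the final exponent is exactly $3$ and the $\sqrt{1+a}$ lands in the right place — there is no new conceptual difficulty beyond what is already in Theorem~\ref{thm: for generalised starshaped}, but the paraboloid metric is not a pure warped product $dr^{2}+h^{2}g_{\mathbb{S}^{n-1}}$, so the $\frac{1}{1+4r^{2}}$ coefficient on the radial derivative must be handled by its own monotone estimate rather than folded into an $h$.
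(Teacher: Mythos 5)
Your proposal follows the paper's proof essentially step for step: the substitution $\rho = rR_{m}/R_{\theta}$, the Cauchy--Schwarz split with a parameter $\beta^{2}<1$ balanced by $1-(\tfrac{1}{\beta^{2}}-1)a = 1-\beta^{2} = \tfrac{(2+a)-\sqrt{a^{2}+4a}}{2}$, the monotonicity of $t\mapsto t/\sqrt{1+4t^{2}}$ and $t\mapsto\sqrt{1+4t^{2}}$ in place of the warped-product hypotheses, the $\sqrt{1+a}$ boundary estimate, and the Jacobian-weighted test functions are all exactly what the paper does. The only discrepancy is your exponent bookkeeping: carried out directly, the gradient estimate yields just a single factor $R_{m}/R_{M}$ (the $R_{m}^{2}/R_{\phi}^{2}$ from $(\partial_{r}f)^{2}$ cancels against the Jacobian $R_{\phi}/R_{m}$, and the $\sqrt{1+4r^{2}}$ factors are absorbed by monotonicity alone) and the boundary estimate contributes one more, so the Rayleigh quotient is bounded below with $(R_{m}/R_{M})^{2}$ --- which is what the paper's proof actually establishes and which implies the stated bound since $(R_{m}/R_{M})^{2}\geq(R_{m}/R_{M})^{3}$; by contrast, your claimed $(R_{m}/R_{M})^{3}$ in the gradient term combined with a further ratio $R_{M}/R_{m}$ from the boundary would give $(R_{m}/R_{M})^{4}$, which would be too weak.
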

\begin{proof}
Let $f$ be a continuously differentiable real valued function defined on $\overline{\Omega}$. We first obtain a lower bound for $\int_\Omega{\|\nabla f\|^2}\, dA$.
\begin{align*}
\int_\Omega{\|\nabla f\|^2}\, dA &= \int_{0}^{2 \pi} \int_{0}^{R_{\theta}} \left[\frac{1}{1 + 4 r^{2}} \left(\frac{\partial f}{\partial r} \right)^2 + \frac{1}{r^{2}}\left(\frac{\partial f}{\partial \theta} \right)^2\right] r \sqrt{1 + 4 r^{2}} \, dr\, d\theta \\
&= \int_{0}^{2 \pi} \int_{0}^{R_{\theta}} \left[\frac{r}{\sqrt{1 + 4 r^{2}}} \left(\frac{\partial f}{\partial r} \right)^2 +\frac{\sqrt{1 + 4 r^{2}}}{r} \left( \frac{\partial f}{\partial \theta} \right)^2\right] dr\, d\theta
\end{align*}
Let $\phi = \theta$, $\rho = \frac{r \, R_{m}}{R_{\theta}}$. Since $\rho = \frac{r \, R_{m}}{R_{\theta}} \leq r$, we have $\sqrt{1 + 4 r^{2}} \geq \sqrt{1 + 4 \rho^{2}}$ and $\frac{r}{\sqrt{1 + 4 r^{2}}} \geq \frac{\rho}{\sqrt{1 + 4 \rho^{2}}} $. Thus the above integral can be written as
\begin{align*}
\int_\Omega{\|\nabla f\|^2}\, dA &\geq \int_{0}^{2 \pi} \int_{0}^{R_{m}} \left[\frac{\rho}{\sqrt{1 + 4 \rho^{2}}} \left( \frac{R_{m}}{R_{\phi}} \frac{\partial f}{\partial \rho} \right)^2 +\frac{R_{m} \sqrt{1 + 4 \rho^{2}}}{\rho R_{\phi}} \left( \frac{\partial f}{\partial \phi} - \frac{\rho R_{\phi}'}{R_{\phi}} \frac{\partial f}{\partial \rho} \right)^2\right] \\
& \qquad \frac{R_{\phi}}{R_{m}} \, d\rho\, d\phi \\
&= \int_{0}^{2 \pi} \int_{0}^{R_{m}} \left[\frac{\rho}{\sqrt{1 + 4 \rho^{2}}} \left( \frac{\partial f}{\partial \rho} \right)^2 + \frac{R_{\phi} \sqrt{1 + 4 \rho^{2}}}{\rho R_{m}} \left( \frac{\partial f}{\partial \phi} - \frac{\rho R_{\phi}'}{R_{\phi}} \frac{\partial f}{\partial \rho} \right)^2\right] \\
& \qquad    \frac{R_{m}}{R_{\phi}} d\rho\, d\phi \\
& \geq \int_{0}^{2 \pi} \int_{0}^{R_{m}} \left[\frac{\rho}{\sqrt{1 + 4 \rho^{2}}} \left( \frac{\partial f}{\partial \rho} \right)^2 + \frac{ \sqrt{1 + 4 \rho^{2}}}{\rho} \left\lbrace  \left( \frac{\partial f}{\partial \phi}\right)^{2} + \left(\frac{\rho R_{\phi}'}{R_{\phi}}\frac{\partial f}{\partial \rho} \right)^{2} \right. \right. \\
& \qquad \left. \left. - 2  \frac{\rho R_{\phi}'}{R_{\phi}} \frac{\partial f}{\partial \rho} \frac{\partial f}{\partial \phi} \right\rbrace \right]  \frac{R_{m}}{R_{\phi}} d\rho\, d\phi. 
\end{align*}
For any function $\beta^2$ on $\overline{\Omega}$, Cauchy-Schwarz inequality gives
\begin{align*}
- 2 \frac{\rho R_{\phi}'}{R_{\phi}} \frac{\partial f}{\partial \rho} \frac{\partial f}{\partial \phi} \geq - \frac{1}{\beta^2}\left(\frac{\rho R_{\phi}'}{R_{\phi}} \right)^2 \left(\frac{\partial f}{\partial \rho} \right)^2  -\beta^2 \left(\frac{\partial f}{\partial \phi} \right)^2.
\end{align*}
As a consequence, we have
\begin{align*}
\int_\Omega{\|\nabla f\|^2}\, dA & \geq \int_{0}^{2 \pi} \int_{0}^{R_{m}} \left[\frac{\rho}{\sqrt{1 + 4 \rho^{2}}} \left( \frac{\partial f}{\partial \rho} \right)^2 + \frac{ \sqrt{1 + 4 \rho^{2}}}{\rho} \left\lbrace \left(1 - \beta^{2} \right)  \left( \frac{\partial f}{\partial \phi}\right)^{2}  \right. \right. \\
& \qquad \left. \left. -\left(\frac{1}{\beta^2} - 1 \right) \left(\frac{\rho R_{\phi}'}{R_{\phi}}\frac{\partial f}{\partial \rho} \right)^{2} \right\rbrace \right]  \frac{R_{m}}{R_{\phi}} d\rho\, d\phi \\
&= \int_{0}^{2 \pi} \int_{0}^{R_{m}} \left[\left\lbrace 1  - \left(1 + 4 \rho^{2}\right) \left( \frac{1}{\beta^2} -1 \right) \left( \frac{ R_{\phi}'}{R_{\phi}}\right)^{2}\right\rbrace  \frac{\rho}{\sqrt{1 + 4 \rho^{2}}} \left(\frac{\partial f}{\partial \rho} \right)^2 \right.  \\
& \qquad \left.+ \left(1 - \beta^2 \right) \frac{ \sqrt{1 + 4 \rho^{2}}}{\rho} \left(\frac{\partial f}{\partial \phi} \right)^2 \right]   \frac{R_{m}}{R_{\phi}} d\rho\, d\phi. 
\end{align*}

Note that $\left(1 + 4 \rho^{2}\right) \left( \frac{ R_{\phi}'}{R_{\phi}}\right)^{2} \leq \left(1 + 4 R_{\phi}^{2}\right) \left( \frac{ R_{\phi}'}{R_{\phi}}\right)^{2} \leq a $ and $\frac{R_{m}}{R_{\phi}} \geq \frac{R_{m}}{R_{M}}$. Let's assume $\beta^{2}<1$, then the above integral becomes
\begin{align*} 
\int_\Omega{\|\nabla f\|^2}\, dA & \geq \left( \frac{R_{m}}{R_{M}}\right)\int_{0}^{2 \pi} \int_{0}^{R_{m}} \left[\left\lbrace 1 -  \left( \frac{1}{\beta^2} -1 \right) a \right\rbrace  \frac{\rho}{\sqrt{1 + 4 \rho^{2}}} \left(\frac{\partial f}{\partial \rho} \right)^2 \right.  \\
& \qquad \left.+ \left(1 - \beta^2 \right)\frac{ \sqrt{1 + 4 \rho^{2}}}{\rho}   \left(\frac{\partial f}{\partial \phi} \right)^2 \right]  d\rho\, d\phi. 
\end{align*} 
Solving the equation $1 -  \left( \frac{1}{\beta^2} -1 \right) a = 1 - \beta^2$ for $\beta^{2}$, we obtain
\begin{align*}
1 -  \left( \frac{1}{\beta^2} -1 \right) a = 1 - \beta^2 = \frac{(2+a) - \sqrt{a^{2}+4\,a}}{2} > 0.
\end{align*} 
By substituting these values, we have
\begin{align} \nonumber
\int_\Omega{\|\nabla f\|^2}\, dA & \geq \left( \frac{R_{m}}{R_{M}}\right)\frac{(2+a) - \sqrt{a^{2}+4\,a}}{2}\int_{0}^{2 \pi} \int_{0}^{R_{m}} \left[ \frac{\rho}{\sqrt{1 + 4 \rho^{2}}} \left(\frac{\partial f}{\partial \rho} \right)^2  \right. \\ \nonumber
& \qquad \left.+ \frac{ \sqrt{1 + 4 \rho^{2}}}{\rho} \left(\frac{\partial f}{\partial \phi} \right)^2 \right] d\rho\, d\phi \\ \nonumber
&= \left( \frac{R_{m}}{R_{M}}\right)\frac{(2+a) - \sqrt{a^{2}+4\,a}}{2}\int_{0}^{2 \pi} \int_{0}^{R_{m}} \left[ \frac{1}{1 + 4 \rho^{2}} \left(\frac{\partial f}{\partial \rho} \right)^2 + \frac{1}{\rho^{2}}  \right. \\ \nonumber
& \qquad \left.  \left(\frac{\partial f}{\partial \phi} \right)^2 \right]\rho \sqrt{1 + 4 \rho^{2}} \, d\rho\, d\phi \\ \label{gradf expre parabo.}
&= \left( \frac{R_{m}}{R_{M}}\right)\frac{(2+a) - \sqrt{a^{2}+4\,a}}{2}\int_{B(R_{m})} {\|\nabla f\|^2}\, dA.
\end{align}

Now we give a lower bound for $\int_{\partial\Omega}{f^2}\, ds$.
\begin{align*}
\int_{\partial\Omega}{f^2}\, ds &= \int_{0}^{2 \pi} f^2 \sqrt{1 + \left( 1+ 4 R_{\theta}^{2}\right)\left( \frac{R_{\theta}'}{R_{\theta}}\right)^{2}} R_{\theta} \, d\theta \\
&\leq \sqrt{1+a} \int_{0}^{2 \pi} f^2 \, R_{\theta} \, d\theta.
\end{align*}
By substituting $\phi = \theta$, $\rho = \frac{r \, R_{m}}{R_{\theta}}$ and using the fact that $R_{\theta} \leq R_{M}$, we get
\begin{align} \label{f^2 parabo}
\int_{\partial\Omega}{f^2}\, ds \leq \frac{R_{M}\sqrt{1+a}}{R_{m}} \int_{0}^{2 \pi} f^2 \, R_{m} \, d\phi
&= \frac{R_{M}\sqrt{1+a}}{R_{m}} \int_{\partial B(R_{m})} f^2 \, ds.
\end{align}
Hence for a continuously differentiable real valued function $f$ defined on $\overline{\Omega}$, it follows from \eqref{gradf expre parabo.} and \eqref{f^2 parabo} that
\begin{align*} \label{expre of rayle on parabo}
\frac{\int_\Omega{\|\nabla f\|^2}\, dA}{\int_{\partial\Omega}{f^2}\, ds} \geq \left( \frac{R_{m}}{R_{M}}\right)^{2}\frac{(2+a) - \sqrt{a^{2}+4\,a}}{2\sqrt{1+a}} \, \frac{\int_{B(R_{m})} {\|\nabla f\|^2}\, dA}{\int_{\partial B(R_{m})} f^2 \, ds}.
\end{align*}
Now using the same argument as in Theorem \ref{thm: for generalised starshaped}, we get the desired result. 
\end{proof}

\subsection*{Acknowledgment}
The authors would like to thank Dr. Prosenjit Roy for various useful discussions. Some part of this work was done when the first author was working under project PDA/IITK/MATH/96062 at IIT Kanpur.


\begin{thebibliography}{1}
	\bibitem{BG} Binoy, G. Santhanam,  \textit{Sharp upperbound and a comparison theorem for the first nonzero Steklov eigenvalue.} Journal of Ramanujan Mathematical Society \textbf{29}(2) (2014), 133--154.
	\bibitem{BP} J. H. Bramble, L. E. Payne, \textit{Bounds in the Neumann problem for the second oreder uniformly Elliptic operators.} Pacific Journal of Mathematics \textbf{12} (1962), 823--833. 
	\bibitem{B} F. Brock, \textit{An isoperimetric inequality for eigenvalues of the Stekloff problem.} Zeitschrift f\''{u}r Angewandte Mathematik und Mechanik \textbf{81}(1) (2001), 69--71. 
	\bibitem{CGG} B. Colbois, A. Girouard, K. Gittins, \textit{Steklov eigenvalues of submanifolds with prescribed boundary in Euclidean space.} arXiv:1711.0645. 
	\bibitem{E2}J. F. Escobar, \textit{ An Isoperimetric Inequality and the First Steklov Eigenvalue.} Journal of Functional Analysis \textbf{165}(1) (1999), 101--116.
	\bibitem{E3}J. F. Escobar, \textit{A Comparison Theorem for the First Non-zero Steklov Eigenvalue.} Journal of Functional Analysis \textbf{178}(1) (2000), 143--155.
	\bibitem{GM} G. Garcia, O. A. Montano, \textit{ A lower bound for the first Steklov eigenvalue on a domain.} Revista Colombiana de Matematicas \textbf{49}(1) (2015), 95--104.
	\bibitem{GLS} A. Girouard, R. S. Laugesen, B. A. Siudeja, \textit{Steklov eigenvalues and quasiconformal maps of simply connected planar domains.} Archive for Rational Mechanics and Analysis \textbf{219}(2)  (2016), 903--936.
	\bibitem{GP} A. Girouard, I. Polterovich, \textit{Spectral geometry of the Steklov problem (survey article).} Journal of
Spectral Theory \textbf{7}(2) (2017), 321--359.
	\bibitem{KS} J. R. Kuttler, V. G. Sigillito, \textit{Lower bounds for Stekloff and free membrane eigenvalues.} SIAM Review \textbf{10} (1968), 368--370.
	\bibitem{PS} L. Provenzano, J. Stubbe, \textit{ Weyl-type bounds for Steklov eigenvalues.} arXiv:1611.00929.
	\bibitem{SV} S. Verma, \textit{Bounds for the Steklov Eigenvalues.} Archiv der Mathematik \textbf{111}(6) (2018), 657--668.
	\bibitem{S} M. W. Stekloff, \textit{les problemes fondamentaux de la physique mathematique.} Annales Scientifiques de l'\'{E}cole Normale Sup\'{e}rieure, \textbf{19} (1902), 445--490. 
	\bibitem{TF} Thomas, G.B.Jr., Finney, R.L.,\textit{ Calculus and Analytical Geometry.} 9th edition, Addison-Wesley Reading, 1995.
	\bibitem{W} R. Weinstock, \textit{Inequalities for a classical Eigenvalue problem.} Journal of Rational Mechanics and Analysis \textbf{3} (1954), 745--753.
	\bibitem{X} C. Xiong, \textit{Comparison of Steklov eigenvalues on a domain and Laplacian eigenvalues on its boundary in Riemannian manifolds.} arXiv:1704.02073. 
\end{thebibliography}
\end{document}